\numberwithin{equation}{section}
\newtheorem*{rep@theorem}{\rep@title}
\newcommand{\newreptheorem}[2]{%
\newenvironment{rep#1}[1]{%
 \def\rep@title{#2 \ref{##1}}%
 \begin{rep@theorem}}%
 {\end{rep@theorem}}}
\theoremstyle{definition}
\newtheorem{theorem}{Theorem}[section]
\newtheorem{corollary}[theorem]{Corollary}
\newtheorem{lemma}[theorem]{Lemma}
\newtheorem{proposition}[theorem]{Proposition}
\newtheorem*{theorem*}{Theorem}
\newtheorem*{proposition*}{Proposition}
\newtheorem{definition}[theorem]{Definition}
\newtheorem{remark}[theorem]{Remark}
\newtheorem*{claim*}{Claim}
\newtheorem{conjecture}[theorem]{Conjecture}
\newtheorem*{conjecture*}{Conjecture}
\newtheorem*{observation*}{Observation}
\newtheorem*{question*}{Question}
\begin{document}
\title{Continuous $\times p,\times q$-invariant measures on the unit circle}
\author{Huichi Huang}
\address{College of Mathematics and Statistics, Chongqing University, Chongqing, 401331, PR. China}
\email{huanghuichi@cqu.edu.cn}
\keywords{Continuous measures, equidistribution, distribution function}
\subjclass[2010]{Primary: 37A05; Secondary: 11J71}
\date{\today}
\begin{abstract}
We  express continuous $\times p,\times q$-invariant measures on the unit circle via some simple forms. On one hand, a continuous $\times p,\times q$-invariant measure is the weak-$*$ limit of average of Dirac measures along an irrational orbit. On the other hand, a continuous $\times p,\times q$-invariant measure is a continuous function on $[0,1]$ satisfying certain function equations.
\end{abstract}

\maketitle
\tableofcontents

\section{Introduction}\

In ~\cite{Furstenberg1967}, H. Furstenberg shows that  when $\frac{\log{p}}{\log{q}}$ is irrational, every irrational orbit under $\times p,\times q$ is dense in the unit circle $\mathbb{T}$. He also conjectures that the only continuous ergodic $\times p,\times q$-invariant measure is the  Lebesgue measure.

In this paper, we express continuous $\times p,\times q$-invariant measures on the unit circle via two simple forms. One is an average of Dirac measures and the other one is homeomorphisms on $[0,1]$.

The first says the following.
\begin{reptheorem}{IrrationalOrbit}
If $\mu$ is an ergodic $\times p,\times q$-invariant continuous Borel probability measure on $\mathbb{T}$, then there exists an irrational $x\in[0,1)$ such that $$\lim_{N\to\infty}\dfrac{1}{N^2}\sum_{i=0}^{N-1}\sum_{j=0}^{N-1}\delta_{p^iq^jx}=\mu$$ under weak-$*$ topology, where $\delta_y$ is the Dirac measure on $[0,1)$ concentrating at a point $y\in[0,1)$.
\end{reptheorem}

The second is a conjecture equivalent to Furstenberg's conjecture.

\begin{repconjecture}{Cont}
The only homeomorphism $f$ on $[0,1]$ satisfying $f=T_p f=T_q f$ is the identity. Here for a positive integer $n$, the operator $T_n: C[0,1]\to C[0,1]$ is given by
$T_n g(x)=\sum_{i=0}^{n-1} g(\frac{x+i}{n})-g(\frac{i}{n})$ for every $x\in[0,1]$ and $g\in C[0,1]$.
\end{repconjecture}
This can be taken as a real-value function version of~\cite[Prop. 11]{Deninger2011}.
\section{Preliminary}\

\subsection{Conventions}\
Within this article, we denote the unit circle $\{z\in\mathbb{C}|\,|z|=1\}$ by $\mathbb{T}$~(if necessary $\mathbb{T}$ will be also denoted by $\mathbb{R}/\mathbb{Z}$). Denote the set of nonnegative integers by $\mathbb{N}$, the set of positive integers by $\mathbb{Z}^+$ and the function $\exp{2\pi ix}$ for $x\in\mathbb{R}$ by $e(x)$ and the function $e(kx)$ by $z^k$ for every $k\in\mathbb{Z}$. The notation $C(X)$ stands for  the set of continuous functions on a compact Hausdorff space $X$.

A measure always means a Borel probability measure. By identifying $\mathbb{T}$ with $[0,1)$, a measure on $\mathbb{T}$ amounts to a measure on $[0,1)$.

We call a number $a\in\mathbb{T}$ rational if $a=e(x)$ for some rational $x\in [0,1)$, otherwise call $a$ irrational. The greatest common divisor of $m,n\in\mathbb{Z}^+$ is denoted by $\gcd(m,n)$.

Let $\omega=\{x_n\}_{n=1}^\infty$ be a sequence of real numbers contained in the unit interval $[0,1)$ and for any positive integer $N$ and a subset $E\subseteq [0,1)$, denote
$\frac{|\{x_1,\cdots,x_N\}\cap E|}{N}$ by $A(E; N;\omega)$ or briefly $A(E; N)$ if no confusion caused.

For a double sequence $\omega=\{s_{ij}\}_{i,j=0}^\infty\subseteq [0,1)$, positive integers $N,M$ and a subset $E\subseteq [0,1)$, denote $\frac{|\{s_{ij}|0\leq i\leq M-1,0\leq j\leq N-1\}\cap E|}{NM}$ by $A(E; N,M;\omega)$  or briefly $A(E; N,M)$.

\subsection{Equidistributed sequences in $\mathbb{T}$}\
\begin{definition}~[Equidistributed (double) sequences]\

A sequence $\{a_n\}_{n=1}^\infty$ in $\mathbb{T}$ is called {\bf equidistributed}  if the sequence $\omega=\{x_n\}_{n=1}^\infty$ in $[0,1)$ with $e(x_n)=a_n$ satisfies
$$\lim_{N\to\infty}A([a,b); N;\omega)=b-a,$$ for any $0\leq a<b\leq 1$, or equivalently one can say  the sequence $\{x_n\}_{n=1}^\infty$ is uniformly distributed modulo 1~( u.d. $\mod{1}$)~\cite[Defn. 1.1]{KuipersNiederreiter1974}.

A double sequence $\{a_{i,j}\}_{i,j=0}^\infty$ in $\mathbb{T}$ is called {\bf equidistributed}if the sequence $\omega=\{x_{ij}\}_{i,j=0}^\infty$ in $[0,1)$ such that $e(x_{ij})=a_{ij}$ satisfies
$$\lim_{N,M\to\infty}A([a,b); N,M;\omega)=b-a,$$ for any $0\leq a<b\leq 1$, or equivalently one can say  the sequence $\{x_{i,j}\}_{i,j=0}^\infty$ is uniformly distributed modulo 1~( u.d. $\mod{1}$)~\cite[Defn. 2.1]{KuipersNiederreiter1974}.
\end{definition}

For equidistributed  sequences and equidistributed double sequences, one have corresponding Weyl's criterion~\cite[Thm. 2.1 \& Thm. 2.9]{KuipersNiederreiter1974}.
\begin{theorem}~[Weyl's criteria]\

A (double) sequence $\{a_n\}_{n=1}^\infty$~($\{a_{i,j}\}_{i,j=0}^\infty$) is equidistributed on $\mathbb{T}$ iff
$$\lim_{N\to\infty}\dfrac{1}{N}\sum_{n=1}^N a_n^k=0$$
$$(\lim_{N,M\to\infty}\dfrac{1}{NM}\sum_{i=0}^{N-1}\sum_{j=0}^{M-1} a_{ij}^k=0),$$ for every $k\in\mathbb{Z}^+$.
\end{theorem}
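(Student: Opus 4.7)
The plan is to exploit the duality between the weak-$*$ convergence formulation of equidistribution and the fact that the characters $\{z^k\}_{k\in\mathbb{Z}}$ span a subalgebra of $C(\mathbb{T})$ that is dense in sup norm by the Stone--Weierstrass theorem. The key reformulation I want to use is that $\{a_n\}$ is equidistributed iff $\frac{1}{N}\sum_{n=1}^N f(a_n)\to\int_{\mathbb{T}} f\, dm$ for every $f\in C(\mathbb{T})$, where $m$ denotes Lebesgue measure; this equivalence follows from a standard sandwich argument approximating characteristic functions $\chi_{[a,b)}$ from above and below by continuous functions whose integrals differ by at most $\varepsilon$.

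For the forward direction (necessity) in the single-sequence case, I would assume equidistribution and first upgrade the defining interval-indicator limit to the continuous-function limit just described. Specializing $f(z)=z^k$ for $k\in\mathbb{Z}^+$ gives $\int z^k\, dm=0$, which is exactly the Weyl sum condition. For the converse (sufficiency), I would note that $a_n^{-k}=\overline{a_n^k}$ since $|a_n|=1$, so the hypothesis for $k\in\mathbb{Z}^+$ implies $\frac{1}{N}\sum_{n=1}^N a_n^k\to 0$ for all $k\in\mathbb{Z}\setminus\{0\}$; by linearity this extends to $\frac{1}{N}\sum p(a_n)\to\int p\, dm$ for every trigonometric polynomial $p$. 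Given $f\in C(\mathbb{T})$ and $\varepsilon>0$, Stone--Weierstrass produces a trigonometric polynomial $p$ with $\|f-p\|_\infty<\varepsilon$, and a three-$\varepsilon$ estimate
\[
\Bigl|\tfrac{1}{N}\!\sum_n f(a_n)-\!\int f\, dm\Bigr|\le 2\|f-p\|_\infty+\Bigl|\tfrac{1}{N}\!\sum_n p(a_n)-\!\int p\, dm\Bigr|
\]
yields the continuous-function limit, which is then pushed back to intervals via the sandwich argument to conclude equidistribution.

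For the double-sequence case the argument is formally identical, with $\frac{1}{N}\sum_{n=1}^N$ replaced by $\frac{1}{NM}\sum_{i=0}^{N-1}\sum_{j=0}^{M-1}$ and the limit taken as $N,M\to\infty$ independently. The approximation errors produced by Stone--Weierstrass and by the interval sandwich depend only on $f$ (respectively on $\varepsilon$) and not on the sequence or on $N,M$, so every step transfers verbatim.

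The main technical point, rather than an obstacle, is the joint limit $N,M\to\infty$ in the double case: one must check that in the three-$\varepsilon$ estimate the tail term $\bigl|\frac{1}{NM}\sum_{i,j} p(a_{ij})-\int p\, dm\bigr|$ is controlled uniformly after choosing $p$, which is automatic because $p$ is a \emph{finite} linear combination of characters and each character's double average tends to zero by hypothesis. Beyond that, the result is classical and I would simply cite~\cite[Thm.~2.1 \& Thm.~2.9]{KuipersNiederreiter1974} for the detailed verification.
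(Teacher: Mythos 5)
Your proof is correct. The paper does not prove this statement at all---it is quoted as a classical fact with the citation to Kuipers--Niederreiter---and your argument (necessity by specializing the continuous-function reformulation to the characters $z^k$, sufficiency via conjugation to get all $k\neq 0$, trigonometric-polynomial approximation by Stone--Weierstrass with a three-$\varepsilon$ estimate, and the sandwich of interval indicators by continuous functions, with the double-sequence case transferring verbatim because the approximating polynomial is a finite sum of characters so the joint limit $N,M\to\infty$ is controlled uniformly) is precisely the standard proof found in that reference, so there is nothing to correct.
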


Equivalently one have the following
\begin{theorem}~\cite[Thm. 1.1 \& Thm. 2.8]{KuipersNiederreiter1974}\

A (double) sequence $\{a_n\}_{n=1}^\infty$~($\{a_{i,j}\}_{i,j=0}^\infty$) in $\mathbb{T}$ is equidistributed iff
$$\lim_{N\to\infty}\dfrac{1}{N}\sum_{n=1}^N f(a_n)=\int_{\mathbb{T}}f(z)\,dm(z)$$
$$(\lim_{N,M\to\infty}\dfrac{1}{NM}\sum_{i=0}^{N-1}\sum_{j=0}^{M-1} f(a_{ij})=\int_{\mathbb{T}}f(z)\,dm(z)),$$ for every $f\in C(\mathbb{T})$. Here $m$ is the Lebesgue measure of $\mathbb{T}$.
\end{theorem}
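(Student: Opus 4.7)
The plan is to prove the two directions of the equivalence separately, working first with the single-sequence statement and then observing that the double-sequence case requires only cosmetic changes. Throughout, I would use the identification $\mathbb{T}\cong[0,1)$ via $e(\cdot)$, so $a_n=e(x_n)$ with $x_n\in[0,1)$.

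For the forward direction, assume $\{a_n\}$ is equidistributed. The definition of equidistribution is exactly the assertion that the integral convergence holds for characteristic functions of half-open subintervals $\chi_{[a,b)}$. By linearity, the convergence then holds for every step function on $[0,1)$. Given $f\in C(\mathbb{T})$ and $\varepsilon>0$, I would use uniform continuity of $f$ (and hence of its identification with a continuous $1$-periodic function on $\mathbb{R}$) to produce step functions $g_1,g_2$ on $[0,1)$ with $g_1\leq f\leq g_2$ and $\int(g_2-g_1)\,dm<\varepsilon$. The sandwich
\[
\frac{1}{N}\sum_{n=1}^N g_1(x_n)\leq \frac{1}{N}\sum_{n=1}^N f(x_n)\leq \frac{1}{N}\sum_{n=1}^N g_2(x_n)
\]
and the already-established convergence for step functions yields $\limsup$ and $\liminf$ within $\varepsilon$ of $\int f\,dm$; letting $\varepsilon\to 0$ completes this direction.

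For the reverse direction, assume the integral convergence holds for every $f\in C(\mathbb{T})$. Fix $0\leq a<b\leq 1$ and $\varepsilon>0$. I would construct continuous functions $h_1,h_2$ on $\mathbb{T}$ with $h_1\leq \chi_{[a,b)}\leq h_2$ and $\int(h_2-h_1)\,dm<\varepsilon$ (e.g., piecewise-linear bumps that agree with $\chi_{[a,b)}$ outside small neighborhoods of the endpoints). The sandwich
\[
\frac{1}{N}\sum_{n=1}^N h_1(x_n)\leq A([a,b);N;\omega)\leq \frac{1}{N}\sum_{n=1}^N h_2(x_n)
\]
together with the hypothesis gives $|A([a,b);N;\omega)-(b-a)|<\varepsilon$ for large $N$; letting $\varepsilon\to 0$ finishes this direction.

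The main (mild) obstacle is only bookkeeping at the boundary points $0$ and $1$ when one identifies $[0,1)$ with $\mathbb{T}$, which is handled by choosing the one-sided approximations carefully. The double-sequence case requires no new ideas: the same step-function sandwich and bump-function approximation arguments work verbatim with $\frac{1}{N}\sum_{n=1}^N$ replaced by $\frac{1}{NM}\sum_{i=0}^{N-1}\sum_{j=0}^{M-1}$ and the single limit replaced by the double limit in $N,M\to\infty$. Together with Weyl's criterion quoted just above, this gives the statement exactly in the form used later to analyze weak-$*$ limits of averaged Dirac measures.
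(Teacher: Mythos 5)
Your argument is correct and is the standard two-sided approximation proof (step functions sandwiching continuous functions for one direction, continuous bump functions sandwiching indicators for the other); the paper itself offers no proof, simply citing Kuipers--Niederreiter, where essentially this same argument appears. The only detail worth adding is that $C(\mathbb{T})$ here includes complex-valued functions (the Weyl criterion is applied to $z^k$), so in the forward direction you should first reduce to real-valued $f$ by splitting into real and imaginary parts before invoking the sandwich $g_1\leq f\leq g_2$.
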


A weaker version of equidistribution of double sequences is the following~\cite[The paragraph before Lemma 2.4]{KuipersNiederreiter1974}.
\begin{definition}\
A double sequence $\{a_{i,j}\}_{i,j=0}^\infty\subset\mathbb{T}$ is called {\bf equidistributed in the squares} on $\mathbb{T}$ if the sequence $\omega=\{x_{ij}\}_{i,j=0}^\infty$ in $[0,1)$ with $e(x_{ij})=a_{ij}$ satisfies
$$\lim_{N\to\infty}A([a,b); N,N;\omega)=b-a,$$ for any $0\leq a<b\leq 1$.
\end{definition}

Similarly $\{a_{i,j}\}_{i,j=0}^\infty\subset\mathbb{T}$ is  equidistributed in the squares on $\mathbb{T}$ iff
$$\lim_{N\to\infty}\dfrac{1}{N^2}\sum_{i=0}^{N-1}\sum_{j=0}^{N-1} a_{ij}^k=0$$ for every $k\in\mathbb{Z}^+$.

\section{Equidistributed double sequences and ergodic $\times p,\times q$ invariant measures}\

\subsection{Equidistributed irrational orbits}\
From now on, we fix two positive integers $p,q$ such that $\frac{\log{p}}{\log{q}}\notin\mathbb{Q}$~(the multiplicative semigroup $\{p^iq^j\}_{i,j\in\mathbb{N}}\nsubseteqq \{a^n\}_{n\in\mathbb{N}}$ for every $a\in\mathbb{Z}^+$.

In this section, we show that every ergodic $\times p,\times q$-invariant measure $\mu$ on $\mathbb{T}$ can be written as the weak-$*$ limit of
$\{\frac{1}{N^2}\sum_{i=0}^{N-1}\sum_{j=0}^{N-1}\mu_{a^{p^iq^j}}\}_{N=1}^\infty$ for some $a\in \mathbb{T}$. 

\begin{definition}~[Generic point]\

A point $a\in\mathbb{T}$ is called {\bf generic} with respect to an ergodic $\times p,\times q$-invariant measure $\mu$ of $\mathbb{T}$ if
$$\lim_{N\to\infty}\dfrac{1}{N^2}\sum_{i=0}^{N-1}\sum_{j=0}^{N-1}f(a^{p^iq^j})=\mu(f)$$ for all $f\in C(\mathbb{T})$. Denote the set of generic points with respect to $\mu$ by $X_\mu$.
\end{definition}

\begin{definition}~[Amenable semigroup]~\cite[p. 2]{Bowley1971}~\cite[p. 2]{OrnsteinWeiss1983}\

A countable discrete semigroup $P$ is called {\bf amenable} if there exists a sequence  $\{F_n\}_{n=1}^\infty$ of finite subsets of $P$ such that
$$\lim_{n\to\infty}\dfrac{|sF_n\bigtriangleup F_n|}{|F_n|}=0$$ for any $s\in P$, and  $\{F_n\}_{n=1}^\infty$ is called a (left) F{\o}lner sequence. A F{\o}lner sequence $\{F_n\}_{n=1}^\infty$ is called {\bf special} if

\begin{enumerate}
\item $F_n\subseteq F_{n+1}$;
\item There exists some constant $M>0$ such that $|F_n^{-1}F_n|\leq M|F_n|$ for all $n\in\mathbb{Z}^+$, where $F_n^{-1}F_n=\{s\in P|\,ts\in F_n \,\rm{for \,some} \, t\in F_n\}$.
\end{enumerate}

\end{definition}

Before proceeding to prove the main result, we need a pointwise ergodic theorem as a preliminary, which is a special case of~\cite[Thm. 3]{Bowley1971}.

\begin{theorem}~[Generalized Birkhoff pointwise ergodic theorem]~\label{PointwiseErgodic}\

Suppose $P$ is a discrete amenable semigroup and $X$ is a compact Hausdorff space. Assume that there is a continuous, measure-preserving action of $P$ on a Borel probability space $(X,\mathcal{B},\mu)$, and $\mu$ is an ergodic $P$-invariant measure. If $P$ has a special F{\o}lner sequence $\{F_n\}_{n=1}^\infty$, then for every $f\in L^1(X,\mu)$, the sequence
$\{\dfrac{1}{|F_n|}\sum_{s\in F_n} f(s\cdot x)\}_{n=1}^\infty$ converges almost everywhere to a $P$-invariant function  $f^*\in L^1(X,\mu)$ such that $\int_X f\,d\mu=\int_X f^*\,d\mu$.
\end{theorem}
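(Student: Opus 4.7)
The plan, since the theorem is advertised as a special case of~\cite[Thm.~3]{Bowley1971}, is either to invoke that result directly after verifying the hypotheses match, or, if a self-contained argument is preferred, to follow the classical three-step template for pointwise ergodic theorems: a mean ergodic theorem in $L^2$, a weak-type maximal inequality, and a density/$\varepsilon$-approximation argument.

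For Step~1, set $A_n f := \frac{1}{|F_n|}\sum_{s\in F_n} f\circ s$. Amenability together with the F{\o}lner property implies by the standard argument that $A_n f$ converges in $L^2$-norm to the orthogonal projection of $f$ onto the subspace of $P$-invariant $L^2$-functions; by ergodicity this projection is the constant $\int f\,d\mu$, which delivers the integral identity in the conclusion for $f\in L^2$. For Step~3, one picks a dense subset $\mathcal D\subset L^1$ on which a.e.\ convergence is manifest — bounded functions approximated by coboundaries $g-g\circ s$ are natural, since for coboundaries the averages telescope and the F{\o}lner condition makes them vanish — and then a standard $\varepsilon/3$ argument, using Step~2 to control the error off a small exceptional set, upgrades the pointwise a.e.\ limit from $\mathcal D$ to all of $L^1$.

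The main obstacle is Step~2, the weak-type $(1,1)$ bound for $Mf(x) := \sup_n A_n(|f|)(x)$. In a genuine group one extracts a Vitali-type disjoint subfamily using the translation structure, and the proof is classical; in a semigroup with no inverses this argument collapses unless one controls the left-thickening $F_n^{-1}F_n$. This is exactly the role of the \emph{special} F{\o}lner condition $|F_n^{-1}F_n|\le M|F_n|$: with it in force, the covering argument goes through and yields the desired weak-type bound with constant depending only on $M$. This is the essential technical content isolated in~\cite{Bowley1971}, and it is precisely why the theorem is cited from there rather than from a group pointwise ergodic theorem.

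Finally, $P$-invariance of the limit $f^*$ is immediate from the fact that $A_n(f\circ s) - A_n f$ tends to $0$ in $L^1$ by the F{\o}lner property, so pointwise a.e.\ limits must agree with their $s$-translates; and the integral identity $\int f\,d\mu = \int f^*\,d\mu$ follows from Fatou applied to $|A_n f|$ together with $L^1$-boundedness of the averages, or more cleanly by truncating $f$ and invoking bounded convergence. I would expect the actual write-up to consist mainly of a careful statement that the semigroup $P = \{p^iq^j : i,j\in\mathbb{N}\}$ with $F_N = \{p^iq^j : 0\le i,j\le N-1\}$ satisfies both clauses of the special F{\o}lner definition, so that Bowley's result applies verbatim.
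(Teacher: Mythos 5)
Your primary route --- verifying the hypotheses and invoking \cite[Thm.~3]{Bowley1971} --- is exactly what the paper does: Theorem~\ref{PointwiseErgodic} is stated without proof as a special case of Bowley's result, and your sketched three-step Tempelman-style argument (mean ergodic theorem in $L^2$, maximal inequality via the condition $|F_n^{-1}F_n|\le M|F_n|$, density/approximation argument) is a correct account of what that citation hides. One small relocation: checking that $\mathbb{N}^2$ acting by $\times p,\times q$ admits the special F{\o}lner sequence $F_n=\{(i,j)\mid 0\le i,j\le n-1\}$ is not part of this general theorem but is carried out in the proof of Theorem~\ref{GenericEverywhere}.
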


Using Theorem~\ref{PointwiseErgodic}, we prove the following theorem which shows generic points with respect to an ergodic $\times p,\times q$-invariant measure $\mu$ are almost everywhere.

\begin{theorem}~\label{GenericEverywhere}
For every ergodic $\times p,\times q$-invariant measure $\mu$ on $\mathbb{T}$, we have $\mu(X_\mu)=1$.
\end{theorem}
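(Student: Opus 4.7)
The plan is to apply the generalized Birkhoff pointwise ergodic theorem (Theorem~\ref{PointwiseErgodic}) to the multiplicative semigroup $P=\{p^iq^j:i,j\in\mathbb{N}\}$ acting continuously on $(\mathbb{T},\mu)$ by $s\cdot a=a^s$, with the candidate F{\o}lner sequence $F_N=\{p^iq^j:0\le i,j\le N-1\}$. Because $\frac{\log p}{\log q}\notin\mathbb{Q}$, the elements $p^iq^j$ are pairwise distinct, so $|F_N|=N^2$ and the double averages appearing in the definition of $X_\mu$ coincide with the ergodic averages $\frac{1}{|F_N|}\sum_{s\in F_N}f(s\cdot a)$. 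To verify that $\{F_N\}$ is \emph{special}: the nesting $F_N\subseteq F_{N+1}$ is immediate; for the F{\o}lner property, for $s=p^aq^b$ and $N>\max(a,b)$ one has $|sF_N\cap F_N|=(N-a)(N-b)$, so $|sF_N\triangle F_N|=2(a+b)N-2ab=O(N)$ and $|sF_N\triangle F_N|/|F_N|\to 0$; for the size condition, any $s\in P$ with $ts\in F_N$ for some $t\in F_N$ must itself lie in $F_N$, so $F_N^{-1}F_N\subseteq F_N$ and $M=1$ suffices.

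With the hypotheses of Theorem~\ref{PointwiseErgodic} satisfied, each $f\in C(\mathbb{T})\subseteq L^1(\mathbb{T},\mu)$ yields a $P$-invariant $f^*\in L^1(\mathbb{T},\mu)$ with $\int_{\mathbb{T}}f^*\,d\mu=\mu(f)$ such that $\frac{1}{N^2}\sum_{i,j=0}^{N-1}f(a^{p^iq^j})\to f^*(a)$ for $\mu$-almost every $a$. Ergodicity of $\mu$ forces every $P$-invariant $L^1$-function to be $\mu$-a.e.\ constant, so $f^*\equiv\mu(f)$ almost everywhere. This pins down the limit for a single $f$ at a time.

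The main subtlety is to upgrade from ``for each $f$, a.e.\ convergence holds'' to ``a.e.\ $a$ is generic for every $f$ simultaneously''. I would fix a countable uniformly dense subset $\{f_k\}\subset C(\mathbb{T})$ (available since $\mathbb{T}$ is a compact metric space), let $A_k$ be the full-measure set on which the averages of $f_k$ converge to $\mu(f_k)$, and set $A=\bigcap_k A_k$. On $A$, a three-$\varepsilon$ estimate bounding $\bigl|\tfrac{1}{N^2}\sum f(a^{p^iq^j})-\tfrac{1}{N^2}\sum f_k(a^{p^iq^j})\bigr|$ by $\|f-f_k\|_\infty$ extends the convergence from $\{f_k\}$ to every $f\in C(\mathbb{T})$, giving $A\subseteq X_\mu$ and hence $\mu(X_\mu)\ge\mu(A)=1$. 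This separability step is routine, so the genuine content of the proof lies in the verification that $\{F_N\}$ is a valid special F{\o}lner sequence for $P$.
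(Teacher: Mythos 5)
Your proposal is correct and follows essentially the same route as the paper: apply the generalized Birkhoff pointwise ergodic theorem (Theorem~\ref{PointwiseErgodic}) to the square F{\o}lner sets, use ergodicity to identify the limit with $\mu(f)$, and pass from a countable dense family in $C(\mathbb{T})$ to all of $C(\mathbb{T})$. The only cosmetic difference is that you work with the multiplicative semigroup $\{p^iq^j\}$ (using $\tfrac{\log p}{\log q}\notin\mathbb{Q}$ to get $|F_N|=N^2$) where the paper uses the isomorphic semigroup $\mathbb{N}^2$, and you spell out the special F{\o}lner verification and the three-$\varepsilon$ density step that the paper leaves implicit.
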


\begin{proof}
Consider the measure preserving action of $\mathbb{N}^2$ on $(\mathbb{T},\mu)$ given by $\times p,\times q$. Note that $\mathbb{N}^2$ is an amenable semigroup with a special F{\o}lner sequence $\{F_{n}\}_{n=1}^\infty$  given by $F_{n}=\{(i,j)|0\leq i,j\leq n-1\}$.   Since $\mu$ is ergodic, every $\mathbb{N}^2$-invariant function in $L^1(\mathbb{T},\mu)$ is constant. Applying Theorem~\ref{PointwiseErgodic},  we have
$$\lim_{N\to\infty}\dfrac{1}{N^2}\sum_{i=0}^{N-1}\sum_{j=0}^{N-1}S^iT^j(f)(x)=\mu(f)$$ for every $f\in C(\mathbb{T})$ and almost every $x\in\mathbb{T}$ with respect to $\mu$.  Denote the set of such points for $f$ by $X_f$. Then $\mu(X_f)=1$.

Take a countable dense set $\{f_n\}_{n=1}^\infty$ in $C(\mathbb{T})$. Then it is easy to see that $X_\mu=\bigcap_{n=1}^\infty X_{f_n}$ and hence $\mu(X_\mu)=1$.
\end{proof}

\begin{corollary}~\label{Support}
If $\mu$ is finitely supported, then $Supp(\mu)$, the support of $\mu$ is a subset of $X_\mu$.
\end{corollary}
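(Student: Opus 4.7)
The plan is to deduce this essentially for free from Theorem~\ref{GenericEverywhere}. Since $\mu$ is finitely supported, its support is a finite set $Supp(\mu)=\{a_1,\dots,a_k\}\subset\mathbb{T}$, and by definition each atom has positive mass, i.e.\ $\mu(\{a_i\})>0$ for every $i$. Theorem~\ref{GenericEverywhere} gives $\mu(X_\mu)=1$, hence $\mu(\mathbb{T}\setminus X_\mu)=0$.

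Suppose for contradiction that some $a_i\in Supp(\mu)$ failed to lie in $X_\mu$. Then $\{a_i\}\subseteq\mathbb{T}\setminus X_\mu$, so $\mu(\mathbb{T}\setminus X_\mu)\geq\mu(\{a_i\})>0$, contradicting $\mu(X_\mu)=1$. Therefore every atom of $\mu$ is generic, i.e.\ $Supp(\mu)\subseteq X_\mu$. There is no real obstacle here; the only thing to notice is that in the finitely supported case the measure-theoretic conclusion $\mu(X_\mu)=1$ automatically upgrades to the pointwise statement $Supp(\mu)\subseteq X_\mu$, precisely because points in the support are not $\mu$-null.
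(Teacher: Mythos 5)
Your proof is correct and is essentially identical to the paper's: both argue that each point of the finite support carries positive mass, so if any such point were outside $X_\mu$ then $\mu(X_\mu)<1$, contradicting Theorem~\ref{GenericEverywhere}. You have simply written out the contradiction in slightly more detail than the paper does.
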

\begin{proof}
Since $\mu$ is atomic, the set $Supp(\mu)$ consists of finitely many atoms. Hence every atom is in $X_\mu$ otherwise $\mu(X_\mu)<1$.
\end{proof}

Next we prove that every rational is a generic point with respect to an atomic ergodic $\times p,\times q$-invariant measure.
\begin{lemma}~\label{Orbit}\
If $x,y\in [0,1)$ are in the same orbit under $\times p,\times q$~(which means $x=p^iq^iy \mod 1$ for some $i,j\in\mathbb{Z}$, then $x\in X_\mu$ iff $y\in X_\mu$.
\end{lemma}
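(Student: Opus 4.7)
The plan is to reduce the lemma to a single building block: showing that if $w' \equiv p^aq^b w \pmod{1}$ for some nonnegative integers $a, b$, then $w \in X_\mu$ if and only if $w' \in X_\mu$. Once this is in hand, the general case follows by noting that two points in the same $\times p,\times q$-orbit always admit a common descendant: there exist nonnegative integers $a,b,c,d$ with $p^a q^b x \equiv p^c q^d y \pmod 1$; calling this common value $z \in [0,1)$ and applying the building block twice gives $x \in X_\mu \iff z \in X_\mu \iff y \in X_\mu$.

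To prove the building block, I would fix $f \in C(\mathbb{T})$ and perform the change of summation variables $I = i+a$, $J = j+b$ to obtain
$$\frac{1}{N^2}\sum_{i=0}^{N-1}\sum_{j=0}^{N-1} f\bigl(e(p^i q^j w')\bigr) = \frac{1}{N^2}\sum_{I=a}^{N-1+a}\sum_{J=b}^{N-1+b} f\bigl(e(p^I q^J w)\bigr).$$
The shifted index rectangle $[a,N-1+a]\times[b,N-1+b]$ and the square $[0,N-1]^2$ have symmetric difference of size at most $2(a+b)N + O(1)$, since each dimension contributes at most $(a+b)$ extra rows/columns of length $N$. Hence, after dividing by $N^2$, the two averages differ by at most $\tfrac{2(a+b)+O(1/N)}{N}\|f\|_\infty$, which tends to $0$ as $N \to \infty$. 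Therefore the limit defining $w \in X_\mu$ exists and equals $\mu(f)$ precisely when the analogous limit at $w'$ does, for every $f \in C(\mathbb{T})$.

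There is no real obstacle here; the argument is essentially a bookkeeping estimate about how a bounded shift of a square affects a Cesàro-type average, and the choice of the $N \times N$ normalization (rather than a non-square rectangle) is what makes the bound $O(1/N)$ come out cleanly. The only thing to keep straight is the interpretation of ``same orbit'' as ``having a common forward image'', so that all exponents appearing in the argument are nonnegative and the reduction is symmetric in $x$ and $y$.
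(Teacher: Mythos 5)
Your proof is correct and follows essentially the same route as the paper: both reduce to a common forward image $z$ with $p^aq^bx \equiv p^cq^dy \pmod 1$ and then compare the $N\times N$ Ces\`aro averages at a point and at its image under $\times p^aq^b$. The paper merely asserts the resulting equality of limits, whereas you supply the (correct) symmetric-difference estimate of order $O(N)$ on the shifted index squares that justifies it.
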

\begin{proof}
Let $a=e(x)$ and $b=e(y)$. There exists $c\in\mathbb{T}$ such that $c=a^{p^mq^n}=b^{p^kq^l}$ for some $k,l,m,n\in\mathbb{N}$. The proof follows from
\begin{align*}
\lim_{N\to\infty}\dfrac{1}{N^2}\sum_{i=0}^{N-1}\sum_{j=0}^{N-1}f(a^{p^iq^j})&=\lim_{N\to\infty}\dfrac{1}{N^2}\sum_{i=0}^{N-1}\sum_{j=0}^{N-1}f(c^{p^iq^j})  \\
&=\lim_{N\to\infty}\dfrac{1}{N^2}\sum_{i=0}^{N-1}\sum_{j=0}^{N-1}f(b^{p^iq^j})
\end{align*}
(if any of these three limits exists) for all $f\in C(\mathbb{T})$.
\end{proof}

A finite Borel measure $\mu$ on $\mathbb{T}$ is called {\bf continuous} or {\bf non-atomic} if $\mu\{z\}=0$ for every $z\in\mathbb{T}$.

\begin{theorem}~\label{IrrationalOrbit}
Every rational $a\in\mathbb{T}$ is a generic point with respect to a finitely supported ergodic $\times p,\times q$-invariant measure. Hence for an ergodic $\times p,\times q$-invariant continuous  measure $\mu$ on $\mathbb{T}$, there exists an irrational $x\in[0,1)$ such that $$\lim_{N\to\infty}\dfrac{1}{N^2}\sum_{i=0}^{N-1}\sum_{j=0}^{N-1}\delta_{p^iq^jx}=\mu$$ under weak-$*$ topology.
\end{theorem}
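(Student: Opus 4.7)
The theorem has two parts, and my plan is to establish the first (every rational is generic for a finitely supported ergodic $\times p,\times q$-invariant measure) and then deduce the second from it via Theorem~\ref{GenericEverywhere}.

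For the first part, fix a rational $a=e(m/n)$ with $\gcd(m,n)=1$. I would factor $n=n_1 n_2$ where $n_2$ is the largest divisor of $n$ whose prime factors all divide $pq$, and $n_1=n/n_2$ is coprime to $pq$. The Chinese remainder theorem then writes $a=bc$ in $\mathbb{T}$ with $b$ of order dividing $n_1$ and $c$ of order dividing $n_2$. The key observation is that $c^{p^iq^j}=1$ as soon as $(i,j)\in[I_0,\infty)\times[J_0,\infty)$ for suitable thresholds: each prime $\ell\mid n_2$ divides $p$ or $q$, so its multiplicity in $n_2$ is absorbed by $p^i$ or $q^j$, yielding $n_2\mid p^iq^j$. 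Hence $a^{p^iq^j}=b^{p^iq^j}$ on this cofinite subset of $\mathbb{N}^2$, and since its complement meets $[0,N)^2$ in only $O(N)$ points, the Cesàro averages $\nu_N(a):=\tfrac{1}{N^2}\sum_{i,j<N}\delta_{a^{p^iq^j}}$ and $\nu_N(b)$ share the same weak-$*$ limit.

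Next I would analyze $b$ via character theory. Since $\gcd(n_1,pq)=1$, the integers $p,q$ represent units in $(\mathbb{Z}/n_1\mathbb{Z})^\times$; let $G=\langle p,q\rangle$ be the subgroup they generate. The orbit $O_b$ of $b$ in $\mathbb{T}$ under $\mathbb{N}^2$ is finite, and finiteness of $G$ promotes the semigroup orbit to a full $G$-orbit on which $G$ acts freely and transitively (freeness uses that $b$ has exact denominator $n_1$, which follows from the CRT construction). A Weyl-type calculation then gives $\nu_N(b)\to\mu_b$, where $\mu_b$ is the uniform measure on $O_b$: it suffices to check convergence on characters $\chi\in\widehat{G}$, and
\[
\frac{1}{N^2}\sum_{i,j<N}\chi(p^iq^j)=\Big(\frac{1}{N}\sum_{i<N}\chi(p)^i\Big)\Big(\frac{1}{N}\sum_{j<N}\chi(q)^j\Big)\longrightarrow[\chi=\mathbf{1}],
\]
because nontriviality of $\chi$ on $\langle p,q\rangle$ forces $\chi(p)\neq 1$ or $\chi(q)\neq 1$, so at least one factor is the Cesàro average of powers of a nontrivial root of unity. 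The limit $\mu_b$ is finitely supported, $\times p,\times q$-invariant, and ergodic since $\mathbb{N}^2$ acts transitively on $O_b$.

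For the second part, let $\mu$ be continuous and ergodic. The first part supplies, for each rational $a$, an atomic ergodic measure $\mu_a$ for which $a$ is generic; since $\mu$ is non-atomic we have $\mu_a\neq\mu$, and since a point is generic for at most one measure (by the uniqueness of weak-$*$ limits) no rational lies in $X_\mu$. Theorem~\ref{GenericEverywhere} then yields $\mu(X_\mu)=1$, so $X_\mu$ is nonempty and must contain an irrational $x$; the defining property of $X_\mu$ delivers the required convergence. The main obstacle will be the first part: setting up the correct splitting $n=n_1 n_2$ and the thresholds $(I_0,J_0)$ carefully enough to see that the transient contribution vanishes, after which the character argument on the recurrent part is routine.
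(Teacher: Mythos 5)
Your proposal is correct, and for the first (and main) assertion it takes a genuinely more hands-on route than the paper. The paper argues abstractly: it moves $m/n$ along its orbit to a fraction $s/t$ with $\gcd(t,pq)=1$ (this is where Lemma~\ref{Orbit} is invoked), asserts the existence of a finitely supported ergodic invariant measure whose support contains $s/t$, and then cites Corollary~\ref{Support} — which rests on the generalized Birkhoff theorem via Theorem~\ref{GenericEverywhere} — to conclude that $s/t$, hence $m/n$, is generic. You instead make everything explicit: the CRT splitting $n=n_1n_2$ with thresholds $(I_0,J_0)$ is a quantitative version of the paper's orbit reduction (the transient part contributes $O(N)/N^2$ in total variation, so Lemma~\ref{Orbit} is not needed as a black box), and the character computation on $G=\langle p,q\rangle\subseteq(\mathbb{Z}/n_1\mathbb{Z})^\times$ replaces the appeal to Corollary~\ref{Support}, identifying the limit concretely as the uniform measure on the finite orbit $O_b$ and supplying the existence statement the paper leaves implicit (your checks of invariance and ergodicity via transitivity of the semigroup action, and the remark that the finite semigroup image equals the group $\langle p,q\rangle$, close that loop). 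What the paper's route buys is brevity, reusing machinery already proved; what yours buys is a self-contained, elementary first part with an explicitly named limit measure, at the cost of some bookkeeping (exact order of $b$ under CRT, the factorized Ces\`aro--Weyl sum, freeness of the $G$-action — though note the limit is uniform on $O_b$ even without freeness, since the fibers of $g\mapsto b^g$ are stabilizer cosets of equal size). For the deduction of the second assertion both arguments coincide: genericity of every rational for an atomic measure, uniqueness of weak-$*$ limits, and $\mu(X_\mu)=1$ from Theorem~\ref{GenericEverywhere} force $X_\mu$ to be nonempty and to consist of irrationals.
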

\begin{proof}
Let $a=e(\frac{m}{n})$ for  $m,n\in\mathbb{Z}^+$ with $\gcd(m,n)=1$. Then there exist $s,t\in\mathbb{Z}^+$ such that
\begin{itemize}
\item $\frac{m}{n}$ and $\frac{s}{t}$ are in the same orbit under $\times p,\times q$.
\item $\gcd(s,t)=1$ and $\gcd(t,pq)=1$.
\end{itemize}

There exists an ergodic $\times p,\times q$-invariant measure $\mu$ finitely supported in $\{\frac{j}{s}|0\leq j\leq s-1\}$ such that $\frac{s}{t}$ is in $Supp(\mu)$. Combining Corollary~\ref{Support} and Lemma~\ref{Orbit}, we finish the proof of the first part. The second part follows immediately.
\end{proof}

\section{Invariant subspace of $C[0,1]$ under an action of a multiplicative semigroup $\Sigma$ of $\mathbb{N}$.}\

\subsection{$\times p,\times q$-invariant measures via continuous functions on $[0,1]$}\
\begin{definition}~\label{T_n}
For a positive integer $n\geq 2$, define $T_n:C[0,1]\to C[0,1]$ by
$$T_n(g)(x)=\sum_{j=0}^{n-1} [g(\frac{x+j}{n})-g(\frac{j}{n})]$$ for all $g\in C[0,1]$. We say an $f\in C[0,1]$ is {\bf $T_n$-invariant} if $T_n(f)=f$.
\end{definition}
 We see that $T_n$ is a bounded linear operator under the  norm $\|g\|:=\max_{x\in [0,1]}{|g(x)|}$ for all $g\in C[0,1]$.

\begin{lemma}
For $n\geq 2$, if $f$ is $T_n$-invariant, then
\begin{equation}~\label{eqf}
\int_0^1 f(x)\,dx=\frac{\sum_{j=0}^{n-1}f(\frac{j}{n})}{n-1}.
\end{equation}
\end{lemma}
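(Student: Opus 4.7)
The plan is to integrate the $T_n$-invariance equation $f(x) = \sum_{j=0}^{n-1}\left[f\!\left(\tfrac{x+j}{n}\right) - f\!\left(\tfrac{j}{n}\right)\right]$ over $[0,1]$ and use a simple change of variables.

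First I would move the constant part out of the sum to write
\[
f(x) \;=\; \sum_{j=0}^{n-1} f\!\left(\frac{x+j}{n}\right) \;-\; \sum_{j=0}^{n-1} f\!\left(\frac{j}{n}\right),
\]
and integrate termwise. For each $j\in\{0,1,\dots,n-1\}$, the substitution $u=(x+j)/n$ (so $du = dx/n$) sends $[0,1]$ to $[j/n,(j+1)/n]$, giving $\int_0^1 f\!\left(\tfrac{x+j}{n}\right)dx = n\int_{j/n}^{(j+1)/n} f(u)\,du$. Summing over $j$ telescopes the intervals into the full $[0,1]$, so
\[
\sum_{j=0}^{n-1}\int_0^1 f\!\left(\tfrac{x+j}{n}\right)dx \;=\; n\int_0^1 f(u)\,du.
\]

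Plugging this back in gives $\int_0^1 f(x)\,dx = n\int_0^1 f(u)\,du - \sum_{j=0}^{n-1} f(j/n)$, hence $(n-1)\int_0^1 f(x)\,dx = \sum_{j=0}^{n-1} f(j/n)$, which is the claimed identity after dividing by $n-1$ (legal since $n\geq 2$).

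There is no real obstacle here: the identity is just the averaging-over-$n$-branches calculation that underlies why $T_n$ is the transfer operator of $\times n$. The only thing to be careful about is the accounting for the two sums — the $\sum f(j/n)$ term is a constant in $x$ that simply reappears on the right-hand side after integration, while the mean-value contribution from the rescaled copies of $f$ collapses to $n\int_0^1 f$ via the telescoping of the intervals $[j/n,(j+1)/n]$.
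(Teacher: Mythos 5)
Your proposal is correct and is essentially the paper's own argument: integrate the invariance identity over $[0,1]$, change variables $u=(x+j)/n$ so the $j$-th term becomes $n\int_{j/n}^{(j+1)/n} f(u)\,du$, sum to get $n\int_0^1 f$, and solve for $\int_0^1 f$. No differences worth noting.
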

\begin{proof}

Take integral from 0 to 1 on both sides, we get
\begin{align*}
\int_0^1 f(x)\,dx&=\int_0^1 [\sum_{j=0}^{n-1} [f(\frac{x+j}{n})-f(\frac{j}{n})]\,dx  \\
&=\sum_{j=0}^{n-1}n\int_{\frac{j}{n}}^{\frac{j+1}{n}} f(x)\,dx-\sum_{j=0}^{n-1}f(\frac{j}{n}) \\
&=n\int_0^1 f(x)-\sum_{j=0}^{n-1}f(\frac{j}{n}).
\end{align*}
Then Equation~\ref{eqf} follows immediately.
\end{proof}

\begin{proposition}~\label{asso}\
For two positive integers $n$ and $m$, we have $T_nT_m=T_{nm}$.
\end{proposition}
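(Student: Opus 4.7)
The plan is to expand $T_n(T_m g)(x)$ directly from Definition~\ref{T_n} and then repackage the resulting double sum as a single sum of length $nm$. Writing out the outer operator gives
\[
T_n(T_m g)(x) = \sum_{j=0}^{n-1}\Bigl[(T_m g)\bigl(\tfrac{x+j}{n}\bigr) - (T_m g)\bigl(\tfrac{j}{n}\bigr)\Bigr],
\]
so the first step is to substitute the definition of $T_m$ at the two arguments $\frac{x+j}{n}$ and $\frac{j}{n}$. The additive constants $g(k/m)$ that $T_m$ produces are independent of $j$ and $x$, so they cancel inside the bracket and disappear from the sum; what survives is
\[
T_n(T_m g)(x) = \sum_{j=0}^{n-1}\sum_{k=0}^{m-1}\Bigl[g\bigl(\tfrac{x+j+kn}{mn}\bigr) - g\bigl(\tfrac{j+kn}{mn}\bigr)\Bigr].
\]

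The second step is the combinatorial heart of the argument: the map $(j,k)\mapsto j+kn$ is a bijection from $\{0,\dots,n-1\}\times\{0,\dots,m-1\}$ onto $\{0,1,\dots,mn-1\}$, by uniqueness of the Euclidean division of an integer $i\in\{0,\dots,mn-1\}$ by $n$. Reindexing the double sum via $i = j+kn$ therefore collapses it to
\[
T_n(T_m g)(x) = \sum_{i=0}^{mn-1}\Bigl[g\bigl(\tfrac{x+i}{mn}\bigr) - g\bigl(\tfrac{i}{mn}\bigr)\Bigr] = T_{nm}(g)(x),
\]
which is exactly the definition of $T_{nm}$ applied to $g$. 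Since this identity holds for every $x\in[0,1]$ and every $g\in C[0,1]$, the operator identity $T_n T_m = T_{nm}$ follows.

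There is no real obstacle here; the proof is a direct unfolding of definitions together with the elementary bijection above. The only point that requires any care is the cancellation of the constants $g(k/m)$ when one subtracts $(T_m g)(j/n)$ from $(T_m g)((x+j)/n)$, which is immediate because these constants do not depend on the summation index $j$ of the outer operator.
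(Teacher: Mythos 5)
Your proof is correct and follows essentially the same route as the paper's: expand the composition, cancel the $g(k/m)$ constants, and reindex the double sum via the bijection $(j,k)\mapsto j+kn$ onto $\{0,\dots,mn-1\}$. The only difference is cosmetic --- you make the Euclidean-division bijection explicit where the paper leaves it implicit.
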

\begin{proof}
\begin{align*}
&T_nT_mf(x)=\sum_{j=0}^{n-1}[T_mf(\frac{x+j}{n})-T_mf(\frac{j}{n})]   \\
&=\sum_{j=0}^{n-1}\sum_{k=0}^{m-1}\{[f(\frac{\frac{x+j}{n}+k}{m})-f(\frac{k}{m})]-[f(\frac{\frac{j}{n}+k}{m})-f(\frac{k}{m})]\} \\
&=\sum_{j=0}^{n-1}\sum_{k=0}^{m-1}[f(\frac{x+j+nk}{mn})-f(\frac{j+nk}{mn})]\\
&=\sum_{j=0}^{mn-1}[f(\frac{x+j}{mn})-f(\frac{j}{mn})] =T_{nm}f(x).
\end{align*}
\end{proof}

Let $\Sigma$ be a multiplicative semigroup of $\mathbb{N}$. By Proposition~\ref{asso}, we have a semigroup action of $\Sigma$ on $C[0,1]$ given by $T_n$ for all $n\in\Sigma$. We say an $f\in C[0,1]$ is $\Sigma$-invariant if $f$ is $T_n$-invariant for all $n\in\Sigma$.

Next we show some connection between continuous $\Sigma$-invariant measures on $\mathbb{T}$ and $\Sigma$-invariant functions in $C[0,1]$.

Given a probability measure $\mu$ on $\mathbb{T}$, identify $\mathbb{T}$ with $[0,1)$. Then $\mu$ can be taken as a probability measure on $[0,1)$, and  if $\mu$ is continuous, then $D_\mu$ is a  nondecreasing continuous function on $[0,1]$ and $D_\mu(0)=0, D_\mu(1)=1$.

\begin{definition}~\label{dist}\
For a probability measure $\mu$ on $[0,1)$, define the {\bf distribution function} of $\mu$, denoted by $D_\mu$, by $D_\mu(x)=\mu[0,x)$ for all $x\in[0,1]$.
\end{definition}

\begin{proposition}~\label{invmf}\
Suppose $\mu$ is a continuous measure on $\mathbb{T}$. Then $\mu$ is $\times n$-invariant iff $D_\mu$ is $T_n$-invariant.
\end{proposition}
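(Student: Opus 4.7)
The plan is to directly compute the function $T_n D_\mu$ and recognize it as the distribution function of the pushforward measure $\mu \circ (\times n)^{-1}$, reducing the equivalence in the proposition to the elementary fact that a Borel probability measure on $[0,1)$ is determined by its distribution function.

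First, I would describe the preimage of $[0,x)$ under the map $\times n \colon [0,1) \to [0,1)$, $y \mapsto ny \bmod 1$. For any $x \in [0,1]$,
$$
(\times n)^{-1}\bigl([0,x)\bigr) \;=\; \bigsqcup_{j=0}^{n-1}\Bigl[\tfrac{j}{n},\, \tfrac{j+x}{n}\Bigr),
$$
a disjoint union of $n$ intervals. Using $\mu[a,b) = D_\mu(b) - D_\mu(a)$ for $0 \le a \le b \le 1$ (valid because $D_\mu$ is continuous, thanks to the continuity of $\mu$), this gives
$$
\mu\bigl((\times n)^{-1}[0,x)\bigr) \;=\; \sum_{j=0}^{n-1}\Bigl[D_\mu\bigl(\tfrac{j+x}{n}\bigr) - D_\mu\bigl(\tfrac{j}{n}\bigr)\Bigr] \;=\; T_n(D_\mu)(x),
$$
by the very definition of the operator $T_n$ in Definition~\ref{T_n}. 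On the other hand, $\mu[0,x) = D_\mu(x)$ by Definition~\ref{dist}.

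Hence the identity $T_n(D_\mu)(x) = D_\mu(x)$ holding for every $x \in [0,1]$ is exactly the statement that $\mu$ and its pushforward $\mu \circ (\times n)^{-1}$ agree on all half-open intervals $[0,x)$. Since these intervals form a $\pi$-system generating the Borel $\sigma$-algebra of $[0,1)$, and both measures in question are probability measures, Dynkin's $\pi$-$\lambda$ theorem forces them to agree on every Borel set. Conversely, if $\mu$ is $\times n$-invariant then the two measures agree everywhere, in particular on the intervals $[0,x)$, so $D_\mu = T_n D_\mu$.

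There is no serious obstacle: the argument is a one-line preimage computation followed by a standard measure-extension step. The only minor point worth flagging is why one can pass from equality on the generating intervals to full invariance of $\mu$, which is handled by the $\pi$-$\lambda$ theorem; alternatively, one could invoke that a Borel probability measure on $[0,1)$ is uniquely determined by its distribution function.
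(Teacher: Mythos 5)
Your proof is correct, and for the converse direction it takes a genuinely different route from the paper. The forward computation is the same in both: the preimage of $[0,x)$ under $\times n$ decomposes as the disjoint union $\bigsqcup_{j=0}^{n-1}[\tfrac{j}{n},\tfrac{j+x}{n})$, which yields $T_nD_\mu(x)=\mu\bigl((\times n)^{-1}[0,x)\bigr)$. Where you diverge is in exploiting this identity fully: you read $T_nD_\mu$ as the distribution function of the pushforward $\mu\circ(\times n)^{-1}$, so that $T_nD_\mu=D_\mu$ becomes equality of two probability measures on the $\pi$-system $\{[0,x)\}$, and the $\pi$-$\lambda$ theorem gives both implications at once. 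The paper instead proves the converse by Fourier analysis: assuming $D_\mu=T_nD_\mu$, it computes $\mu(z^k)=\int_0^1 e^{2\pi ikx}\,dD_\mu(x)$ by Stieltjes integration by parts, substitutes the functional equation for $D_\mu$, and deduces $\hat\mu(k)=\hat\mu(kn)$ for all $k$, which characterizes $\times n$-invariance by density of trigonometric polynomials. Your argument is more elementary and more robust: it avoids the integration-by-parts manipulations and in fact proves the equivalence for an arbitrary Borel probability measure --- the continuity hypothesis is only needed so that $D_\mu$ lies in $C[0,1]$ and $T_n$ is applicable, not (as you suggest in passing) for the identity $\mu[a,b)=D_\mu(b)-D_\mu(a)$, which is plain additivity. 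The paper's route has the mild advantage of tying the statement directly to the Fourier-coefficient formulation of invariance used elsewhere in the literature, but as a proof of this proposition yours is cleaner.
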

\begin{proof}
Suppose $\mu$ is $\times n$-invariant. For any $x\in[0,1]$, the preimage of $[0,x)$ under $\times n$ is $\bigcup_{i=0}^{n-1}[\frac{i}{n},\frac{x+i}{n})$. So we get
$$D_\mu(x)=\mu[0,x)=\sum_{i=0}^{n-1}\mu[\frac{i}{n},\frac{x+i}{n})=\sum_{i=0}^{n-1}D_\mu(\frac{x+i}{n})-D_\mu(\frac{i}{n})=T_nD_\mu(x).$$

On the other hand, assume that $D_\mu$ is $T_n$-invariant. To show that $\mu$ is $\times n$-invariant, we only need to check that $\mu(z^k)=\mu(z^{kn})$ for all positive integers $k$. Here $z=e^{2\pi i x}$.
\begin{align*}
\mu(z^k)&=\int_0^1 e^{2\pi ikx}\,dD_\mu(x)= e^{2\pi ikx}D_\mu(x)|_0^1-\int_0^1 D_\mu(x)\,de^{2\pi ikx}  \\
&=1-2\pi ik\int_0^1 [\sum_{j=0}^{n-1}D_\mu(\frac{x+j}{n})-D_\mu(\frac{j}{n})]\,e^{2\pi ikx}dx  \\
&=1-2\pi ik\int_0^1 [\sum_{j=0}^{n-1}D_\mu(\frac{x+j}{n})]\,e^{2\pi ikx}dx   \\
&=1-2\pi ikn\sum_{j=0}^{n-1}\int_{\frac{j}{n}}^{\frac{j+1}{n}}D_\mu(x)e^{2\pi ik(nx-j)}\,dx \\
&=1-2\pi ikn\int_0^1D_\mu(x)e^{2\pi iknx}\,dx=\mu(z^{kn}).
\end{align*}
\end{proof}

For a semigroup $\Sigma\subseteq\mathbb{N}$, denote the space of $\Sigma$-invariant functions by $C_\Sigma[0,1]$.
\begin{theorem}\
The Lebesgue measure is the only continuous $\Sigma$-invariant measure on $\mathbb{T}$ if $\dim{C_\Sigma[0,1]}=1$.
\end{theorem}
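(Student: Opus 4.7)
The plan is to deduce the theorem by exploiting the bijection, provided by Proposition~\ref{invmf}, between continuous $\Sigma$-invariant measures on $\mathbb{T}$ and those elements of $C_\Sigma[0,1]$ that arise as distribution functions. The dimension hypothesis will then pin down $D_\mu$ up to a scalar, and the normalization $D_\mu(0)=0$, $D_\mu(1)=1$ will remove the remaining freedom.

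First I would observe that the identity function $\mathrm{id}(x)=x$ is always a member of $C_\Sigma[0,1]$. Indeed, for any $n\geq 2$,
\begin{equation*}
T_n(\mathrm{id})(x)=\sum_{j=0}^{n-1}\left[\tfrac{x+j}{n}-\tfrac{j}{n}\right]=\sum_{j=0}^{n-1}\tfrac{x}{n}=x,
\end{equation*}
so $\mathrm{id}$ is $T_n$-invariant for every $n\in\Sigma$. Since $\mathrm{id}$ is a nonzero element of $C_\Sigma[0,1]$ and we are assuming $\dim C_\Sigma[0,1]=1$, we conclude $C_\Sigma[0,1]=\mathbb{R}\cdot\mathrm{id}$.

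Next, let $\mu$ be any continuous $\Sigma$-invariant probability measure on $\mathbb{T}\cong[0,1)$ and consider its distribution function $D_\mu\in C[0,1]$ from Definition~\ref{dist}. By Proposition~\ref{invmf} applied to each $n\in\Sigma$, the function $D_\mu$ is $T_n$-invariant for all $n\in\Sigma$, hence $D_\mu\in C_\Sigma[0,1]$. By the previous paragraph, $D_\mu(x)=\lambda x$ for some $\lambda\in\mathbb{R}$. The continuity of $\mu$ together with $\mu([0,1))=1$ forces $D_\mu(0)=0$ and $\lim_{x\to 1^-}D_\mu(x)=1$, so $\lambda=1$ and $D_\mu(x)=x$. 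Since a probability measure on $[0,1)$ is uniquely determined by its distribution function, $\mu$ coincides with the Lebesgue measure.

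There is essentially no obstacle here beyond assembling the ingredients correctly: the only points to be careful about are (i) verifying that $\mathrm{id}$ lies in $C_\Sigma[0,1]$ so the one-dimensional span is genuinely spanned by $\mathrm{id}$ rather than being trivial, and (ii) using the continuity of $\mu$ to guarantee that $D_\mu$ is continuous at the endpoints, so the normalization $D_\mu(1)=1$ actually forces $\lambda=1$. Both are immediate from the setup already established in the paper.
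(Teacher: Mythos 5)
Your proof is correct and follows essentially the same route as the paper: you verify that the identity function lies in $C_\Sigma[0,1]$, use the dimension hypothesis to conclude $C_\Sigma[0,1]=\mathbb{R}\cdot\mathrm{id}$, and then apply Proposition~\ref{invmf} together with the normalization $D_\mu(1)=1$ to force $D_\mu(x)=x$. The only difference is cosmetic (you spell out the computation $T_n(\mathrm{id})=\mathrm{id}$ and the endpoint normalization in slightly more detail than the paper does).
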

\begin{proof}
Note that $T_nx=x$ for all positive integer $n$. Hence  $\dim{C_\Sigma[0,1]}\geq1$. If $\dim{C_\Sigma[0,1]}=1$, then $C_\Sigma[0,1]$ consists of functions of the form $\alpha x$ for some complex number $\alpha$. Suppose $\mu$ is a continuous $\Sigma$-invariant measure on $\mathbb{T}$. By Proposition~\ref{invmf}, the distribution function $D_\mu$ is in $C_\Sigma[0,1]$ and $D_\mu(1)=1$. So $D_\mu(x)=x$, which means that $\mu$ is the Lebesgue measure.
\end{proof}

Consequently, if the following conjecture is true, then Furstenberg's conjecture is true.
\begin{conjecture}~\label{Cont}\
The only $f\in C[0,1]$ satisfying that

\begin{enumerate}
\item $f$ is non-decreasing~(even by Furstenberg's classification result of closed $\times p,\times q$-invariant subsets of $\mathbb{T}$, we can assume that $f$ is strictly increasing, hence a homeomorphism on $[0,1]$ with $f(0)=0$);
\item $f(x)=\sum_{i=0}^{p-1} f(\frac{x+i}{p})-f(\frac{i}{p})=\sum_{i=0}^{q-1} f(\frac{x+i}{q})-f(\frac{i}{q})$,
\end{enumerate}
is $x$.
\end{conjecture}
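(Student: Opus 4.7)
The strategy is to translate Conjecture~\ref{Cont} into the measure-theoretic picture via Proposition~\ref{invmf}, and then attempt to reduce it to an equidistribution statement. Given an $f \in C[0,1]$ satisfying the hypotheses, we may assume $f(0)=0$ and $f(1)=1$, so that $f = D_\mu$ is the distribution function of a continuous probability measure $\mu$ on $\mathbb{T}$. By Proposition~\ref{invmf}, the equations $f = T_p f$ and $f = T_q f$ mean precisely that $\mu$ is simultaneously $\times p$- and $\times q$-invariant. Proving $f(x) = x$ is therefore equivalent to proving $\mu$ is Lebesgue measure, i.e.\ Furstenberg's conjecture.

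First I would apply the ergodic decomposition to $\mu$ with respect to the $\mathbb{N}^2$-action by $\times p, \times q$. Since $\mu$ is continuous, almost every ergodic component $\mu_\omega$ is a continuous $\times p, \times q$-invariant probability measure, and it suffices to show each $\mu_\omega$ is Lebesgue. By Theorem~\ref{IrrationalOrbit}, for each such $\omega$ there is an irrational point $x_\omega \in [0,1)$ with
\[
\mu_\omega = \lim_{N\to\infty}\frac{1}{N^2}\sum_{i=0}^{N-1}\sum_{j=0}^{N-1}\delta_{p^iq^jx_\omega}
\]
in the weak-$*$ topology. Hence the problem reduces to showing that for every irrational $x \in [0,1)$, the double sequence $\{p^iq^jx \bmod 1\}_{i,j\geq 0}$ is equidistributed in $[0,1)$; equivalently, by Weyl's criterion for double sequences, that
\[
\lim_{N\to\infty}\frac{1}{N^2}\sum_{i=0}^{N-1}\sum_{j=0}^{N-1}e(kp^iq^jx) = 0 \qquad \text{for every } k \in \mathbb{Z}\setminus\{0\}.
\]

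The main obstacle is that this equidistribution statement is Furstenberg's conjecture itself; his classical density theorem gives only that the orbit $\{p^iq^jx\}$ is dense, not equidistributed, so the measure-theoretic reduction merely rephrases the problem. A complementary, purely function-theoretic line of attack is to exploit both functional equations simultaneously: by Proposition~\ref{asso} one has $T_{p^k}f = f$ and $T_{q^l}f = f$ for all $k,l \geq 1$, and hence $T_{p^kq^l}f = f$, which pins down $f$ on the finite grid $\{j/(p^kq^l) : 0 \leq j \leq p^kq^l\}$ by a large over-determined linear system whose inhomogeneous terms are the boundary values $f(j/p^k)$ and $f(j/q^l)$. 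One would hope to extract enough rigidity from the compatibility of the two grid systems as $k, l \to \infty$ to force $f(x) = x$ on the dense set $\bigcup_{k,l} \{j/(p^kq^l)\}$ and then conclude by continuity. The hardest step, in either approach, is precisely the step that has remained open since 1967: removing the positive-entropy hypothesis of Rudolph's theorem and ruling out any nontrivial strictly increasing continuous $f$ simultaneously fixed by $T_p$ and $T_q$.
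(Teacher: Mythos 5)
You have not proved the statement, and you should be aware that the paper does not prove it either: \ref{Cont} is stated as a \emph{conjecture}, introduced precisely because (via Proposition~\ref{invmf} and the distribution-function correspondence $f = D_\mu$) it implies, and is essentially equivalent to, Furstenberg's $\times p,\times q$ conjecture, which has been open since 1967. Your write-up correctly carries out the only reduction the paper itself performs --- translating the functional equations $f = T_pf = T_qf$ into simultaneous $\times p$- and $\times q$-invariance of a continuous measure $\mu$ --- and then candidly observes that both of your proposed continuations (equidistribution of $\{p^iq^jx\}$ for every irrational $x$, or rigidity of the overdetermined linear systems on the grids $\{j/(p^kq^l)\}$) are restatements of the open problem rather than attacks on it. So there is no error of reasoning, but there is also no proof: the document is an accurate map of the obstruction, not a route around it.

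Two smaller points worth noting. First, the conjecture as literally stated needs the normalization $f(1)=1$ (equivalently, that $f$ is onto $[0,1]$): every scalar multiple $\alpha x$ satisfies both functional equations, as the paper itself observes when showing $\dim C_\Sigma[0,1]\geq 1$, so without normalization the conclusion ``$f=x$'' cannot hold. Second, your ergodic-decomposition step silently uses that almost every ergodic component of a continuous $\times p,\times q$-invariant measure is itself continuous; this is true (an atomic ergodic component is finitely supported on rationals, and a continuous $\mu$ gives the countable set of rationals measure zero, so almost every component does too), but it deserves a sentence. Neither point rescues the argument: the essential step --- ruling out a nontrivial strictly increasing continuous $f$ fixed by both $T_p$ and $T_q$, i.e.\ removing the positive-entropy hypothesis from Rudolph's theorem --- remains exactly as open in your proposal as in the paper.
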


\subsection{The Cantor function as a $T_3$-invariant function}\
Although Furstenberg's conjecture is equivalent to a conjecture in the framework of calculus, the difficulty doesn't reduce at all. To get a feeling of  this, we look at a concrete example, the Cantor function, which is $T_3$-invariant, but not a homeomorphism on $[0,1]$.

\begin{definition}~[The Cantor function]~\label{Cantor}\

The {\bf Cantor function} $c:[0,1]\to[0,1]$ is defined via the following procedures:
\begin{enumerate}
\item Express $x\in[0,1]$ in base 3;
\item If $x$ contains a 1, replace every digit after the first 1 by 0;
\item Replace all 2s with 1s;
\item Interpret the result as a binary number.
\end{enumerate}
The result is $c(x)$.
\end{definition}

Notice that $c(\frac{1}{3})=c(\frac{2}{3})=\frac{1}{2}$, so $c(x)$ is not a homeomorphism although it is a non-decreasing map from $[0,1]$ onto $[0,1]$.

Let $m(x)=\min\{n|x_n=1\}$ for $x=\sum_{n=1}\frac{x_n}{3^n}\in[0,1]$ with $0\leq x_n \leq 2$. When there is no $n$ such that $x_n=1$, let $m(x)=\infty$.
Hence
\begin{equation}~\label{eqc}
c(x)=\sum_{n<m(x)}\dfrac{\frac{x_n}{2}}{2^n}+\frac{1}{2^{m(x)}}
\end{equation}
 for all $x\in [0,1]$. If $m(x)=1$, then $c(x)=\frac{1}{2}$. Of course, Equation~\ref{eqc} is nothing new, but in some sense, it is more explicit~(hence more convenient) for us to prove some properties of $c(x)$.

Using Equation~\ref{eqc}, the proof of the following lemma is straightforward.

\begin{lemma}~[Properties of $c(x)$]~\label{pc}\

For every $x\in[0,1]$, we have
\begin{enumerate}
\item $c(\frac{x}{3})=\frac{1}{2}c(x)$;
\item $c(\frac{x+1}{3})=\frac{1}{2}$;
\item $c(\frac{x+2}{3})=c(\frac{2}{3})+c(\frac{x}{3})=\frac{1}{2}+\frac{1}{2}c(x)$.
\end{enumerate}
\end{lemma}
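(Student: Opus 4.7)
The plan is to read off everything from the explicit formula \eqref{eqc} by tracking how the base-$3$ expansion of $x$ transforms under the three affine maps $x\mapsto x/3$, $x\mapsto (x+1)/3$, $x\mapsto (x+2)/3$. Write $x=\sum_{n\ge 1} x_n/3^n$ in the convention fixed in the paper (so $x_n<2$ infinitely often). The first observation, which does all of the geometric work, is that each of the three maps amounts simply to prepending a new ternary digit in front of the expansion of $x$: if I denote the digits of the image by $y_1,y_2,\ldots$, then in every case $y_n=x_{n-1}$ for $n\ge 2$, while $y_1=0,1,2$ respectively.

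Next I would read off $m$ from this prepending. For $x\mapsto x/3$ no $1$ is inserted, so the location of the first $1$ shifts by one, giving $m(x/3)=m(x)+1$. For $x\mapsto(x+1)/3$ the prepended digit is itself a $1$, so $m((x+1)/3)=1$. For $x\mapsto(x+2)/3$ the new leading digit is a $2$, and the first $1$ of the new sequence is still the shifted copy of the first $1$ in $x$, so $m((x+2)/3)=m(x)+1$.

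Now I substitute into \eqref{eqc}. In case (1), the $y_1=0$ term vanishes, and the substitution $k=n-1$ in $\sum_{n=2}^{m(x)} x_{n-1}/(2\cdot 2^n)+1/2^{m(x)+1}$ factors out exactly one $\tfrac12$ from every term, producing $\tfrac12 c(x)$. In case (2), since $m((x+1)/3)=1$, \eqref{eqc} collapses to the single tail term $1/2^1=1/2$. In case (3), the $n=1$ contribution equals $(2/2)/2=1/2$, and the remaining part is the same reindexed sum as in case (1), which equals $\tfrac12 c(x)$; combining these yields $\tfrac12+\tfrac12 c(x)$. The alternative expression $c(2/3)+c(x/3)$ is then immediate from $c(2/3)=1/2$ (which is case (2) with $x=1$, or a direct evaluation) together with (1).

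The main obstacle is really just bookkeeping; the only genuine subtlety is the non-uniqueness of base-$3$ expansions at triadic rationals such as $1/3=0.1\overline{0}=0.0\overline{2}$. I would check once that \eqref{eqc} returns the same value on both representations (for $0.1\overline{0}$ one has $m=1$ giving $1/2$; for $0.0\overline{2}$ one has $m=\infty$ and $\sum_{n\ge 2}1/2^n=1/2$), so the three identities hold pointwise regardless of which representation is used, and then I restrict to the paper's convention.
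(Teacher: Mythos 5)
Your proposal is correct and follows exactly the route the paper intends: the paper offers no written proof, merely asserting that the lemma is ``straightforward'' from Equation~\eqref{eqc}, and your digit-prepending computation (including the check that \eqref{eqc} is independent of the choice of ternary representation at triadic rationals) is precisely the bookkeeping being left to the reader. No gaps; the reindexing in cases (1) and (3) and the collapse to $1/2^{m}=1/2$ in case (2) are all accurate.
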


Lemma~\ref{pc} gives the following.

\begin{proposition}~\label{InvC}
The Cantor function $c(x)$ is $T_3$-invariant.
\end{proposition}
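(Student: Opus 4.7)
The plan is to plug the three identities from Lemma~\ref{pc} directly into the definition of $T_3$, and check that everything collapses to $c(x)$. This is essentially a bookkeeping exercise once the three transformation rules are in hand, so the main work has already been done in Lemma~\ref{pc}; the remaining task is just arithmetic.

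First I would write out the definition explicitly:
\begin{align*}
T_3 c(x) &= \sum_{j=0}^{2}\left[c\!\left(\tfrac{x+j}{3}\right) - c\!\left(\tfrac{j}{3}\right)\right] \\
&= \left[c\!\left(\tfrac{x}{3}\right) - c(0)\right] + \left[c\!\left(\tfrac{x+1}{3}\right) - c\!\left(\tfrac{1}{3}\right)\right] + \left[c\!\left(\tfrac{x+2}{3}\right) - c\!\left(\tfrac{2}{3}\right)\right].
\end{align*}
Then I would substitute the values at the three reference points, $c(0)=0$ and $c(\tfrac13)=c(\tfrac23)=\tfrac12$ (both of the latter being immediate from part (2) of Lemma~\ref{pc} with $x=0$ and $x=1$, or directly from Definition~\ref{Cantor}), and replace $c(\tfrac{x}{3})$, $c(\tfrac{x+1}{3})$, $c(\tfrac{x+2}{3})$ using items (1), (2), (3) of Lemma~\ref{pc}.

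After substitution the three bracketed terms become $\tfrac12 c(x)$, $0$, and $\tfrac12 c(x)$ respectively, summing to $c(x)$. Hence $T_3 c(x) = c(x)$ for every $x \in [0,1]$, which is exactly the $T_3$-invariance asserted.

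There is no real obstacle here once Lemma~\ref{pc} is available; the only thing worth double-checking is that the middle term really vanishes, i.e.\ that $c\!\left(\tfrac{x+1}{3}\right)$ is genuinely the \emph{constant} $\tfrac12$ on all of $[0,1]$ (as opposed to just on the relevant endpoints), which is the content of part (2) of Lemma~\ref{pc} and reflects the fact that the middle third of the Cantor construction is a plateau. Given that, the verification is a one-line computation and no further machinery is required.
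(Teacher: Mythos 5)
Your proposal is correct and follows the same route as the paper: both plug the three identities of Lemma~\ref{pc} (together with $c(0)=0$, $c(\tfrac13)=c(\tfrac23)=\tfrac12$) directly into the definition of $T_3$ and observe that the sum collapses to $c(x)$. No gap; the verification is exactly the one-line computation you describe.
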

\begin{proof}
By definition $T_3 c(x)=[c(\frac{x}{3})+c(\frac{x+1}{3})+c(\frac{x+2}{3})]-[c(0)+c(\frac{1}{3})+c(\frac{2}{3})]$ for all $x\in[0,1]$. It follows from Lemma~\ref{pc} that
$$T_3 c(x)=[\frac{c(x)}{2}+\frac{1}{2}+\frac{1}{2}+\frac{c(x)}{2}]-[0+\frac{1}{2}+\frac{1}{2}]=c(x)$$ for all $x\in[0,1]$.
\end{proof}

\begin{lemma}~\label{nL}
$c(x)$ is not a Lipschitz function.
\end{lemma}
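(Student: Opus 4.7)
The plan is to exploit the self-similarity relation $c(x/3)=\tfrac{1}{2}c(x)$ from Lemma~\ref{pc}~(1) to produce a sequence of pairs of points along which the difference quotient blows up. The key observation is that $c$ spreads mass on the Cantor set at rate $1/2^n$ while the underlying scale in $[0,1]$ is only $1/3^n$, so the ratio $(3/2)^n$ defeats any fixed Lipschitz constant.

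Concretely, I would argue by contradiction: suppose $c$ is Lipschitz with constant $L$, so $|c(x)-c(y)|\le L|x-y|$ for all $x,y\in[0,1]$. Iterating Lemma~\ref{pc}~(1) gives $c(3^{-n})=\tfrac{1}{2}c(3^{-(n-1)})=\cdots=2^{-n}c(1)=2^{-n}$ for every $n\ge 1$, using $c(1)=1$ (which follows at $x=1$ from $c(\tfrac{x+2}{3})=\tfrac{1}{2}+\tfrac{1}{2}c(x)$ combined with $c(\tfrac{1}{3})=\tfrac{1}{2}$, or directly from the base-$3$/base-$2$ definition). Applying the Lipschitz inequality to the pair $(x,y)=(3^{-n},0)$ and using $c(0)=0$ yields
\[
2^{-n}=|c(3^{-n})-c(0)|\le L\cdot 3^{-n},
\]
i.e.\ $(3/2)^n\le L$ for every $n\in\mathbb{Z}^+$, which is absurd since $(3/2)^n\to\infty$.

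There is essentially no obstacle here: the only thing to be careful about is justifying the two boundary values $c(0)=0$ and $c(1)=1$ (both immediate from Definition~\ref{Cantor} or from Lemma~\ref{pc}) and then chaining Lemma~\ref{pc}~(1) $n$ times to obtain $c(3^{-n})=2^{-n}$. The argument is a one-line consequence of the mismatch between the geometric scale $3$ and the measure-theoretic scale $2$ built into the Cantor function; no further machinery is needed.
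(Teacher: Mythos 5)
Your proof is correct and follows essentially the same approach as the paper: both exhibit pairs of points whose difference quotient grows like $(3/2)^n$, defeating any Lipschitz constant. The only difference is cosmetic — you iterate the self-similarity relation $c(x/3)=\tfrac{1}{2}c(x)$ at the left endpoint to get $c(3^{-n})=2^{-n}$, while the paper constructs explicit ternary points approaching $x=1$; both computations are valid.
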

\begin{proof}
Take $x=1$ and $y$ such that $y_n=2$ for $n<N$, $y_N=1$ and $y_n=0$ for $n>N$. Then $|x-y|=\frac{2}{3^N}$ and $c(x)-c(y)=1-(\frac{1}{2}+\cdots+\frac{1}{2^N})=\frac{1}{2^N}$.
Hence $$\frac{|c(x)-c(y)|}{|x-y|}=\frac{1}{2}(\frac{3}{2})^N.$$ As $N\to\infty$, we have $y\to x$, but $\frac{|c(x)-c(y)|}{|x-y|}\to\infty$.
\end{proof}

By Equation~\ref{eqf}, we have
$$\int_0^1 c(x)\, dx=\frac{c(\frac{1}{3})+c(\frac{2}{3})}{2}=\frac{1}{2}.$$

\end{document}